\theoremstyle{plain}
\newtheorem{thm}{Theorem}[section]
\newtheorem{lem}[thm]{Lemma}
\theoremstyle{definition}
\newtheorem{defn}[thm]{Definition}
\newtheorem{rmk}[thm]{Remark}
\numberwithin{equation}{section}
\begin{document}

\title{Generic Poincar\'{e}-Bendixson Theorem for singularly perturbed monotone systems with respect to cones of rank-$2$\thanks{Supported by NSF of China No.11825106, 11871231, 11771414 and 11971232, CAS Wu Wen-Tsun Key Laboratory of Mathematics, University of Science and Technology of China.}
}
\setlength{\baselineskip}{16pt}

\author {
 Lin Niu$^a$ and Xizhuang Xie$^{a,b}$  \thanks{
	Corresponding author.}
\\
\\[3pt]
{ $^{a}$School of Mathematical Sciences}\\
{ University of Science and Technology of China}\\
\vspace{\bigskipamount}
{ Hefei, Anhui, 230026, P. R. China}\\
{ $^{b}$School of Mathematical Sciences}\\
{ Huaqiao University}\\
{ Quanzhou, Fujian, 362021, P. R. China}\\
}

\date{}

    \maketitle

\begin{abstract}

We investigate the singularly perturbed monotone systems with respect to cones of rank $2$ and obtain the so called Generic Poincar\'{e}-Bendixson theorem for such perturbed systems, that is, for a bounded positively invariant set, there exists an open and dense subset $\mathcal{P}$ such that for each $z\in \mathcal{P}$, the $\omega$-limit set $\omega(z)$ that contains no equilibrium points is a closed orbit.

\par
\textbf{Keywords}:  Monotone systems with respect to $2$-cones, Regular perturbation, Singular perturbation, Generic Poincar\'{e}-Bendixson Theorem

\end{abstract}

\par \quad \quad \textbf{AMS Subject Classification (2020)}: 34C12, 34D15,
37C65

\section{Introduction}

In this paper, we are concerned with the dynamics of singularly perturbed monotone systems with respect to a cone $C$ of rank $2$. Roughly speaking, {\it a cone $C$ of rank $2$} is a closed subset of $\mathbb{R}^{n}$ that contains a linear subspace of dimension $2$ and no linear subspaces of higher dimension.
The concept of cones of rank $k$ was introduced by Fusco and Oliva \cite{FO91} in a finite-dimensional space,
and Kransosel'skii et al. \cite{KLS89} in a Banach space.

Monotone dynamical systems whose flows respect an order structure have been studied for a long time.
It was Hirsch \cite{H82,H85,H88,H90,H89,H91} who started the full description of the main dynamical properties in the setting of cooperative and competitive systems, whose flows preserve the partial order induced by a proper (i.e., convex, closed, solid, pointed) cone (see \cite{HS03,HS05,S95,S17}).
Among others, the celebrated result of a classical monotone flow with strong order preserving property is the so called Hirsch's Generic Convergence Theorem, which concludes that the set of all $x\in X$, for which the omega-limit set $\omega(x)$ belongs to the set of equilibria, is generic (open-dense, residual) in $X$ (see e.g., \cite{H85,S95,HS05}). Furthermore, for a classical smooth strongly monotone systems, precompact semi-orbits are generically convergent to equilibria in the continuous-time case \cite{Polacik88,Polacik89,SmithandThieme} or to cycles in the discrete-time case \cite{HessandPolacik,PolacikandTerescak,Terescak94,WangandYao}.

The classical monotone flow can be viewed as a monotone system with respect to a cone of rank-$1$.
And the long time behaviors of classical monotone flows are determined by a one dimensional space in the cone of rank-$1$.
For the smooth monotone flows with respect to cones of rank $k$ $(k\geq 2)$ in $\mathbb{R}^{n}$, S\'{a}nchez \cite{Sanchez09} first studied the structure of an omega limit set. By using cones of rank $2$, S\'{a}nchez \cite{Sanchez09} projected the dynamics into planes and proved a Poincar\'{e}-Bendixson type theorem: Any omega limit set of a pseudo-ordered orbit that contains no equilibrium is a closed orbit. Here, an orbit is called pseudo-ordered if it possesses one pair of distinct ordered points.
Besides, S\'{a}nchez's work is strongly related to the theory of R. A. Smith in \cite{RASmith79,RASmith80,RASmith87}, which obtained a Poincar\'{e}-Bendixson theorem for systems in a higher dimensional space.

Later, Feng, Wang and Wu \cite{FWW17} established the Poincar\'{e}-Bendixson theorem of pseudo-ordered orbits for strongly monotone semiflows (with respect to cones of rank $2$) on a Banach space without any smoothness assumption.
Very recently, Weiss and Margaliot \cite{WeissandMargaliot} described an important class of systems that are monotone with respect to a cone of rank $k$ and presented the Poincar\'{e}-Bendixson property for any bounded trajectory in the case $k=2$.
Feng, Wang and Wu \cite{FWW19} obtained the ``generic Poincar\'{e}-Bendixson Theorem" for strongly monotone flows with respect to cones of rank $2$, that is, for generic (belonging to an open and dense set) points, the $\omega$-limit set containing no equilibrium is a single closed orbit.

It deserves to point out that $C^1$-Closing Lemma was used as an effective tool in perturbation theory both for Hirsch's work \cite{H85} and S\'{a}nchez's dynamical result \cite{Sanchez09}.
As a consequence, the perturbation of a monotone system began to attract considerable attention.
Hirsch \cite{H85} obtained that the regular perturbation of a cooperative irreducible vector field is at most eventually cooperative rather than cooperative.
While, for singular perturbations of a cooperative system, Sontag and Wang \cite{WS08} showed that such perturbed system is also eventually cooperative.
Later, Niu \cite{Niu19} showed the $C^1$-regular perturbation of a competitive irreducible vector field is at most eventually competitive.
For the high-rank cones, S\'{a}nchez \cite{Sanchez09} considered the regular perturbation in the sense that a sequence $F_n$ of $C^1$-vector field converges to $F$ in the $C^1$-topology.

Based on these results, one can see that the perturbed systems (eventual cases) inherit sort of dynamical properties of unperturbed cooperative or competitive systems (see \cite{H85,NW18}).
Thus, it is interesting to consider the (both regular and singular) perturbations of a monotone system with respect to high-rank cones and investigate the dynamics of the perturbed systems .

In the present paper, we try to carry out this task by focusing on a singularly perturbed monotone system (with respect to cones of rank $2$) having the form:
\begin{equation}\label{introduction-system}
\begin{cases}
\frac{dx}{dt} & = f_{0}(x,y,\epsilon),\\
\epsilon \frac{dy}{dt} & =g_{0}(x,y,\epsilon),
\end{cases}
\end{equation}
with a positive parameter $\epsilon$ near zero and $(x,y)\in  U \times V$, where $U \subset \mathbb{R}^{n}$, $V \subset \mathbb{R}^{m}$ are open and bounded sets such that $ U \times V$ contains a compact set $D_{\epsilon}$. Under a series of hypotheses (A1)-(A6) (see the details in Section $2$), including that the limiting system ($\epsilon=0$) is monotone with respect to a cone of rank $2$ in $\mathbb{R}^{n}$, we are able to deduce the dynamics of perturbed system from the unperturbed monotone system ($\epsilon =0$) with respect to a cone of rank $2$. More precisely, the Poincar\'{e}-Bendixson type theorem is inherited for the perturbed system:

\vskip 3mm

\textbf{Theorem A} {\it For singular perturbed system \eqref{introduction-system} satisfying (A1)-(A6), there exists an open and dense subset $\mathcal{P}_{\epsilon} \subset D_{\epsilon}$ such that for each $z\in \mathcal{P}_{\epsilon}$, the $\omega$-limit set $\omega(z)$ containing no equilibrium is a closed orbit.}

\vskip 3mm

We call this theorem as ``generic Poincar\'{e}-Bendixson Theorem" for singular perturbed monotone system with respect to cones of rank $2$.
By the geometric singular perturbation theory (see \cite{Fenichel79,Jones95,Nipp92}) under the assumptions (A1)-(A6), there is a slow manifold which attracts all flows in $D_{\epsilon}$. The flow on the slow manifold can be treated as a $C^{1}$-regular perturbation of the limiting flow for $\epsilon =0$. Further, all the dynamics in $D_{\epsilon}$ could be tracked by the flows on the slow manifold.

As one may know, one of the typical examples of a monotone flow with respect to a cone of rank $2$ is the three-dimensional competitive systems.
If the limiting flow is competitive, system \eqref{introduction-system} can be treated as a singular perturbed competitive system, and the flow on the slow manifold is eventually competitive (see \cite{Niu19}). For an eventually competitive system, the asymptotic behaviors of the complete (full) orbits have been investigated,
due to a priori drawback restriction of the so called Non-oscillation Principle for the full orbits (see Niu and Wang \cite{NW18}).
To the best of our knowledge, very limited work has been done on the dynamics (more specifically, the structure of $\omega$-limit sets) of the singular perturbed competitive system.
However, in the view of cones of rank $2$,  the asymptotic behaviors of the positive orbits can be investigated in this paper.
By applying Theorem A, we can obtain the Generic Poincar\'{e}-Bendixson theorem for a singularly perturbed three-dimensional competitive systems. We will present a concrete example to illustrate our main result of generic Poincar\'{e}-Bendixson Theorem.

This paper is organized as follows. In section $2$, we introduce some basic definitions and assumptions for singularly perturbed monotone systems with respect to cones of rank $2$, and present the main result (see Theorem \ref{thm}). Section $3$ is devoted to give the detailed proof of the main theorem.
Finally, in Section $4$, an example for singular perturbations is presented and numerical examples are given to illustrate the effectiveness of our theoretical results.

\section{Basic definitions and the main theorem}

A nonempty closed set $C\subset \mathbb{R}^{n}$ is called {\it a cone of rank $k$} ($k$-cone) if it satisfies $\alpha C\subset C$ for all $\alpha \in \mathbb{R}$ and $\max\{ \text{dim} W : C\supset W \ \ \text{linear} \ \ \text{subspace}\}=k$. A cone $C$ is {\it solid} if $\text{Int }C \neq \emptyset$; and $C$ is called {\it $k$-solid} if there is a $k$-dimensional linear subspace $W$ such that $W\backslash \{0\}\subset \text{Int }C$.
We write
\begin{alignat*}{2}
x &\thicksim y \quad& \text{if}&\ \ y - x \in C, \\
x &\thickapprox y & \text{if}&\ \ y-x \in \text{Int }C.
\end{alignat*}

A flow $\phi_t$ is called {\it monotone with respect to a $k$-solid cone $C$} if $\phi_t(x)\thicksim \phi_t(y)$ whenever $x\thicksim y$ and $t\geq 0$. And $\phi_t$ is called {\it strongly monotone with respect to $C$} if $\phi_t$ is monotone with respect to $C$ and $\phi_t(x)\thickapprox \phi_t(y)$ whenever $x\neq y$, $x\thicksim y$ and $t> 0$.

Consider the system:
\begin{equation}\label{ODEequation}
\frac{dx}{dt} = F(x),
\end{equation}
for which $F: O \rightarrow \mathbb{R}^{n}$ is a $C^{1}$-vector field and $O \subset \mathbb{R}^{n}$ is an open convex set. We denote by $\phi_t$ the flow generated by \eqref{ODEequation}.

Let $U_{F}^{pq}(t)$ be the solution of
\begin{equation}\label{difference-equation}
\begin{cases}
\frac{d}{dt}U = A_{F}^{pq}(t) U \\
U(0)=I,
\end{cases}
\end{equation}
where $A_{F}^{pq}(t)=\int^{1}_{0}DF(s\phi_t(p)+(1-s)\phi_t(q))ds$ for any $p,q\in O$.

\begin{defn}\label{C-cooperativeness}
The system \eqref{ODEequation} is called $C$-cooperative if for any $p,q\in O$, the matrix $U_{F}^{pq}(t)$ satisfies $U_{F}^{pq}(t)[C\backslash \{0\}]\subset \text{Int }C$ for all $t>0$.
\end{defn}

\begin{rmk}
By S\'{a}nchez \cite[Proposition 1]{Sanchez09}, if system \eqref{ODEequation} is $C$-cooperative, then the flow of \eqref{ODEequation} is strongly monotone with respect to cone $C$.
\end{rmk}

In this paper, we are concerned with the study of the singular perturbed differential equations of the form

\begin{equation}\label{slowequ}
\begin{cases}
\frac{dx}{dt} = f_{0}(x,y,\epsilon),\\
\epsilon \frac{dy}{dt} =g_{0}(x,y,\epsilon).
\end{cases}
\end{equation}
System \eqref{slowequ} can be reformulated with a change of time scale as
\begin{equation}\label{Fastequ}
\begin{cases}
\frac{dx}{d\tau} =\epsilon f_{0}(x,y,\epsilon),\\
\frac{dy}{d\tau} =g_{0}(x,y,\epsilon),
\end{cases}
\end{equation}
where $\tau=t/\epsilon $. The time scale given by $\tau$ is said to be fast whereas that for $t$ is slow. When $\epsilon$ is small enough, we call \eqref{slowequ} the slow system and \eqref{Fastequ} the fast system. And the two systems are equivalent as long as $\epsilon \neq 0$.

Let $C^{r}_{b}$ denote a class of functions such that if a function $f$ is in $C^{r}$ and its derivatives up to order $r$ as well as $f$ are bounded, then $f\in C^{r}_{b}$. Throughout this paper, let $Df(x_0)$ denote the derivatives of $f$ evaluated at $x_0$ with respect to the variable $x$. Meanwhile, $D_{x}f(x_0,y_0)$ and $D_{y}f(x_0,y_0)$ denote the partial derivatives of $f$ with respect to $x$ and $y$ evaluated at $(x_0,y_0)$, respectively.
A family of sets $D_{\epsilon} \subset \mathbb{R}^{n}$ is upper semicontinuous at $\epsilon_1 \in [0,\epsilon_{0}]$ if given a neighborhood $U$ of $D_{\epsilon_1}$, there exists a $\delta>0$ such that $D_{\epsilon}\subset U$ for all $\epsilon \in (\epsilon_1-\delta, \epsilon_1+\delta)$. $D_{\epsilon}$ is lower semicontinuous at $\epsilon_1 \in [0,\epsilon_{0}]$ if given any open set $N$ such that $N\cap D_{\epsilon_1}$ is non-empty, there exists a $\delta>0$ such that $N\cap D_{\epsilon}$ is non-empty for all  $\epsilon \in (\epsilon_1-\delta, \epsilon_1+\delta)$. It is continuous at $\epsilon$ if it is both lower and upper semicontinuous at $\epsilon$ and continuous if it is continuous at every $\epsilon \in [0,\epsilon_{0}]$.
Then, we have the following assumptions, where the integer $r>1$ and the positive number $\epsilon_{0}$ are fixed from now on.

\begin{enumerate}[({A}1)]
\item

Let $U \subset \mathbb{R}^{n}$ and $V \subset \mathbb{R}^{m}$ be open and bounded sets. The functions
\begin{equation*}
f_{0} : U \times V \times [0,\epsilon_{0}] \rightarrow \mathbb{R}^{n}
\end{equation*}
and
\begin{equation*}
g_{0} : U \times V \times [0,\epsilon_{0}] \rightarrow \mathbb{R}^{m}
\end{equation*}
are both of class $C^{r}_{b}$.

\item
There is a function
\begin{equation*}
h_0 : U \rightarrow V
\end{equation*}
in $C^{r}_{b}$ such that $g_{0}(x,h_0(x),0)=0$ for all $x$ in $U$.

\item
All eigenvalues of the matrix $D_{y}g_{0}(x,h_0(x),0)$ have negative real parts for every $x\in U$.

\item
There exists a family of convex compact sets $D_{\epsilon} \subset U \times V$, which depend continuously on $\epsilon \in [0,\epsilon_{0}]$, such that \eqref{slowequ}
is positively invariant on $D_{\epsilon}$ for $\epsilon \in (0,\epsilon_{0}]$.

\item
For each $x\in U$, the system
\begin{equation}\label{degenerate}
\frac{dz}{d\tau} =g_{1}(x,z,0) \triangleq g_{0}(x,z+h_0(x),0)
\end{equation}
is defined on $\{z\in \mathbb{R}^{m} : z + h_0(x) \in V \}$. And
the steady state $z = 0$ of \eqref{degenerate} is globally asymptotically stable on $ \{z : z + h_0(x) \in V \}$.

\item

Let $K_{0}$ be the projection of $D_{0} \cap \{(x,y): y = h_0(x),x \in U\}$ onto the $x$-axis.
For $x\in K_{0}$, the system
\begin{equation}\label{limitingequ}
\frac{dx}{dt} =f_{0}(x,h_0(x),0)
\end{equation}
is $C$-cooperative, where $C\subset \mathbb{R}^{n}$ is a cone of rank $2$.

\end{enumerate}

Now, we present the {\it generic Poincar\'{e}-Bendixson Theorem}:

\begin{thm}\label{thm}
Assume that (A1)-(A6) hold. Then there exists a positive constant $\epsilon^{*} < \epsilon_{0}$ such that for each $\epsilon \in (0,\epsilon^{*})$, system \eqref{slowequ} has the following property: there exists an open and dense subset $\mathcal{P}_{\epsilon} \subset  D_{\epsilon}$ such that, for each $z\in \mathcal{P}_{\epsilon}$, $\omega$-limit set $\omega(z)$ that contains no equilibrium is a single closed orbit.
\end{thm}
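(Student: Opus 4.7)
My plan is to deduce the Theorem by reducing the singular perturbation to a $C^1$-regular perturbation on an invariant slow manifold, and then invoking the generic Poincar\'{e}-Bendixson theorem of Feng-Wang-Wu \cite{FWW19} for strongly monotone flows with respect to cones of rank $2$. The key geometric input is that, under (A1)-(A3), the critical manifold $\{y=h_0(x)\}$ is normally hyperbolic and attracting, so Fenichel's theorem yields, for $\epsilon\in(0,\epsilon^{*})$ with $\epsilon^{*}>0$ small enough, a $C^{r}$ family $h_\epsilon:U\to V$ satisfying $\|h_\epsilon-h_0\|_{C^1}=O(\epsilon)$ together with an invariant slow manifold $M_\epsilon=\{(x,h_\epsilon(x)):x\in U\}$ that exponentially attracts nearby orbits. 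Fenichel's fibration theorem moreover supplies a smooth retraction $\pi_\epsilon:\mathcal{N}_\epsilon\to M_\epsilon$ --- the asymptotic phase --- on a neighbourhood of $M_\epsilon$ containing $D_\epsilon$, which commutes with the flow and whose fibers are the strong stable manifolds of their base points.

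On $M_\epsilon$ the dynamics of \eqref{slowequ} is conjugate, via the $x$-coordinate, to $\dot{x}=f_\epsilon(x):=f_0(x,h_\epsilon(x),\epsilon)$, a $C^1$-small perturbation of the $C$-cooperative field $f_0(x,h_0(x),0)$ from (A6). I next argue that $C$-cooperativeness is stable under $C^1$-small perturbations on a compact invariant set. Writing $U_{f_\epsilon}^{pq}(t)$ for the matrizant defined by \eqref{difference-equation} along the perturbed vector field, continuous dependence of solutions of linear ODEs on their coefficients gives $U_{f_\epsilon}^{pq}(t)\to U_{f_0}^{pq}(t)$ uniformly in $(p,q)$ on any compact set and in $t$ on any compact interval; since $\{v\in C:\|v\|=1\}$ is compact and the condition $U_{f_0}^{pq}(t)v\in \text{Int }C$ is open in $\mathbb{R}^{n}$, a compactness argument shows that $f_\epsilon$ is $C$-cooperative on the projection $K_\epsilon$ of $M_\epsilon\cap D_\epsilon$ onto the $x$-axis, provided $\epsilon$ is small. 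The remark after Definition 2.1 then makes the reduced flow on $M_\epsilon$ strongly monotone with respect to the rank-$2$ cone $C$.

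Identifying $M_\epsilon\cap D_\epsilon$ with the compact positively invariant set $K_\epsilon\subset U$ via $x\mapsto(x,h_\epsilon(x))$, the generic Poincar\'{e}-Bendixson theorem of Feng-Wang-Wu now yields an open dense subset $\widetilde{\mathcal{P}}_\epsilon\subset K_\epsilon$ such that for every $x\in\widetilde{\mathcal{P}}_\epsilon$ the set $\omega(x)$, if it contains no equilibrium, is a single closed orbit. Set $\mathcal{P}_\epsilon:=\pi_\epsilon^{-1}(\widetilde{\mathcal{P}}_\epsilon)\cap D_\epsilon$. Because the full orbit through $z\in D_\epsilon$ is asymptotic to the slow-manifold orbit through $\pi_\epsilon(z)$, we have $\omega(z)=\omega(\pi_\epsilon(z))$, so the required alternative transfers from $\widetilde{\mathcal{P}}_\epsilon$ to $\mathcal{P}_\epsilon$. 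Openness of $\mathcal{P}_\epsilon$ is immediate from continuity of $\pi_\epsilon$; for density I use that $\pi_\epsilon$, being a $C^{r}$ submersion coming from Fenichel's fibration, is an open map, so any non-empty open $W\subset D_\epsilon$ has $\pi_\epsilon(W)$ open and non-empty in $M_\epsilon$, whence $\pi_\epsilon(W)\cap\widetilde{\mathcal{P}}_\epsilon\neq\emptyset$, i.e.\ $W\cap\mathcal{P}_\epsilon\neq\emptyset$.

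The main obstacle is twofold: (i) verifying that $C$-cooperativeness --- a condition on cone invariance along all averaged-Jacobian trajectories of the flow --- is robust under $C^1$-small perturbations on compact sets, and (ii) showing that the generic open-dense property transfers cleanly from $M_\epsilon$ to $D_\epsilon$ through Fenichel's asymptotic phase. Point (i) reduces to uniform continuity of linear-ODE solutions with respect to coefficients on compacta combined with openness of $\text{Int }C$; point (ii) hinges on exploiting the smooth foliation structure of the Fenichel projection (openness) rather than only its continuity. Once the geometric singular perturbation framework is in place, each of these points should amount to careful but essentially routine bookkeeping.
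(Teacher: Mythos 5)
Your overall strategy is the same as the paper's: reduce to the slow manifold via geometric singular perturbation theory, treat the reduced equation as a $C^1$-small perturbation of the $C$-cooperative system \eqref{limitingequ}, invoke \cite{FWW19} there, and transfer the open dense set back to $D_\epsilon$ through the stable fibration. However, two of your steps are overclaimed and, as written, do not go through.

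First, you assert that a compactness argument makes $f_\epsilon$ genuinely $C$-cooperative on $K_\epsilon$, i.e.\ $U_{f_\epsilon}^{pq}(t)[C\setminus\{0\}]\subset \text{Int }C$ for \emph{all} $t>0$. The uniform comparison $U_{f_\epsilon}^{pq}(t)\to U_{f_0}^{pq}(t)$ only beats the cone condition for $t$ in a compact interval bounded away from $0$: as $t\to 0^{+}$ the matrizant $U_{f_0}^{pq}(t)$ tends to the identity, so the distance from $U_{f_0}^{pq}(t)v$ to $\partial C$ shrinks to $0$ and no single smallness threshold on $\epsilon$ controls all small $t$. What is actually robust is the \emph{eventual} property: there is $t_*>0$ such that $U_{f_\epsilon}^{pq}(t)[C\setminus\{0\}]\subset \text{Int }C$ for all $t\ge t_*$, obtained by first treating $t\in[t_*,2t_*]$ and then using positive invariance of $K_\epsilon$ together with the cocycle factorization of $U_{f_\epsilon}^{pq}(t)$ (this is exactly Lemma \ref{regularlem}). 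The weaker conclusion still suffices because \cite[Theorem 5.3]{FWW19} only requires the (FWW) hypothesis for $t\ge t_*$, but your argument should be recast in this eventual form.

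Second, you take the asymptotic-phase projection $\pi_\epsilon$ to be defined on a neighbourhood of $M_\epsilon$ \emph{containing} $D_\epsilon$. The Fenichel--Sakamoto fibration is only constructed on a $\delta_0$-neighbourhood of the slow manifold (Lemma \ref{singularlemm} 4) and 6)), and there is no reason for $D_\epsilon$ to lie inside it; consequently $\mathcal{P}_\epsilon=\pi_\epsilon^{-1}(\widetilde{\mathcal{P}}_\epsilon)\cap D_\epsilon$ is not defined on all of $D_\epsilon$. The missing step is to show, using positive invariance of $D_\epsilon$, (A5) and $\|h_\epsilon-h_0\|=O(\epsilon)$, that there is a uniform time $\tau_0$ after which every orbit starting in $D_\epsilon$ has entered the fibered neighbourhood $N_\epsilon$ (cf.\ Lemma 8 of \cite{WS08}), and then to pull the open dense union of fibers back through the diffeomorphism $\phi^{\epsilon}_{\tau_0}$. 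Two smaller points: one must also check that the base points $\xi$ of the fibers through points of $N_\epsilon$ lie in $K_\epsilon$ so that $\widetilde{\mathcal{P}}_\epsilon$ is the relevant dense set, and openness and density of the union of fibers is most safely obtained from the explicit parametrization $T_{\epsilon,d}$ of \cite{Saka90} and the continuity of its inverse, rather than from an asserted submersion property of $\pi_\epsilon$.
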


\section{Details of the proof}

Our approach to prove the main theorem consists of two steps. First, we focus on the regular perturbations of system \eqref{limitingequ}. And then, we utilize the geometric construction (see, e.g., \cite{Fenichel79,Saka90}) of system \eqref{slowequ} to show the generic dynamics.

\subsection{Regularly perturbed monotone systems with respect to high-rank cone}
For a general case, we consider the regular perturbations of system \eqref{ODEequation} in this subsection.
We focus on the system of ODE's
\begin{equation}\label{perturbed-equation}
\frac{dx}{dt} = G(x),
\end{equation}
for which $G: O \rightarrow \mathbb{R}^{n}$ is a $C^{1}$-vector field. Let $W\subset O$ be a convex compact subset. 
Then we obtain the following lemma.

\begin{lem}\label{regularlem}
Let system \eqref{ODEequation} be a $C$-cooperative system. Then there exists $\delta >0$ with the following property: If $\parallel F(z)-G(z)\parallel+\parallel DF(z)-DG(z)\parallel<\delta$ for all $z\in O$, and $W$ is positively invariant under the flow $\psi_{t}$ generated by $G$, then there exists $t_{*}>0$ such that
the matrix $U_{G}^{pq}(t)$ satisfies $U_{G}^{pq}(t)[C\backslash \{0\}]\subset \text{Int }C$ for any $p,q\in W$ and $t\geq t_*$.
\end{lem}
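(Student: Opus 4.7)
The plan is to establish the conclusion first on the compact time window $[t_*, 2t_*]$ by a quantitative perturbation of the $C$-cooperativity of $F$, and then bootstrap to all $t \geq t_*$ using the cocycle identity for the variational equation. I would first pick $t_* > 0$ small enough that the $F$-flow $\phi_t$ starting in $W$ remains in some intermediate compact set $W' \subset O$ for $t \in [0, 2 t_*]$ (possible because $W$ is compact in the open set $O$), so that $U_F^{pq}(t)$ is well defined throughout.

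By $C$-cooperativity of $F$, $U_F^{pq}(t) v \in \text{Int } C$ for every $t > 0$, $v \in C \setminus \{0\}$, and $p, q \in W$. The key quantitative upgrade is a uniform-depth statement: since $(p, q, t, v) \mapsto U_F^{pq}(t) v$ is continuous on the compact domain $W \times W \times [t_*, 2 t_*] \times (C \cap S^{n-1})$ and takes values in the open set $\text{Int } C$, there exists $\eta > 0$ with
\begin{equation*}
d\bigl(U_F^{pq}(t) v,\; \mathbb{R}^n \setminus \text{Int } C\bigr) \geq \eta
\end{equation*}
on this set. Applying Gr\"onwall's inequality first to the flow difference $\psi_t - \phi_t$ and then to the variational equation, the $C^1$-smallness of $F - G$ yields a constant $K$ (depending on $t_*$ and the $C^1$-bounds of $F$) such that $\|U_G^{pq}(t) - U_F^{pq}(t)\| \leq K \delta$ uniformly on $W \times W \times [0, 2 t_*]$. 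Choosing $\delta$ with $K \delta < \eta/2$ forces $U_G^{pq}(t) v \in \text{Int } C$ for $(p, q, t, v)$ in the above compact set, and positive homogeneity of the cone extends this to every $v \in C \setminus \{0\}$.

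To pass from $[t_*, 2 t_*]$ to all $t \geq t_*$, I would use the cocycle identity
\begin{equation*}
U_G^{pq}(s + r) \;=\; U_G^{\psi_s(p),\, \psi_s(q)}(r) \cdot U_G^{pq}(s),
\end{equation*}
which follows from uniqueness of solutions to the linear initial-value problem, together with the positive invariance of $W$ under $\psi_t$ (keeping the shifted base points $\psi_s(p), \psi_s(q)$ in $W$). A straightforward induction on $n$ then shows $U_G^{pq}(t)[C \setminus \{0\}] \subset \text{Int } C$ for all $p, q \in W$ and $t \in [t_*, (n+1) t_*]$: for $t \in [(n+1) t_*, (n+2) t_*]$ decompose as $t = t_* + (t - t_*)$ with $t - t_* \in [n t_*, (n+1) t_*] \subseteq [t_*, (n+1) t_*]$; the base case gives $U_G^{pq}(t_*) v \in \text{Int } C \subset C \setminus \{0\}$, and applying the inductive hypothesis at the shifted base points $\psi_{t_*}(p), \psi_{t_*}(q) \in W$ sends this vector into $\text{Int } C$, proving the claim for $U_G^{pq}(t) v$.

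I expect the main obstacle to be the quantitative matching of two independent estimates: the compactness-based uniform depth $\eta$ of $U_F^{pq}(t)$ in $\text{Int } C$, and the Gr\"onwall error $K \delta$ between $U_F$ and $U_G$, which only survives over a bounded time window. One cannot Gr\"onwall-compare $U_F$ and $U_G$ on an unbounded interval, so the cocycle identity is essential to reduce the long-time statement to the bounded-time comparison; correspondingly, the positive invariance of $W$ under $\psi_t$ is the structural hypothesis that closes the bootstrap, since without it the shifted base points could leave the region where the base case applies.
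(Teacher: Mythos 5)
Your proposal follows essentially the same route as the paper: a compactness/uniform-depth argument for $U_F^{pq}(t)v$ in $\text{Int }C$ combined with two Gr\"onwall estimates (first on $\phi_t-\psi_t$, then on $U_F^{pq}-U_G^{pq}$) to get the conclusion on $[t_*,2t_*]$, followed by the cocycle factorization and the positive invariance of $W$ to bootstrap to all $t\geq t_*$. The only cosmetic difference is that the paper derives smallness of $\parallel U_F^{pq}(t)-U_G^{pq}(t)\parallel$ from uniform continuity of $DF$ on a compact set rather than asserting a linear bound $K\delta$ (which would need $DF$ Lipschitz); either version suffices, since one only needs the difference to tend to zero as $\delta\to 0$.
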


\begin{proof}
Pick $t_*>0$, we consider the matrices $$A_{G}^{pq}(t)=\int^{1}_{0}DG(s\psi_t(p)+(1-s)\psi_t(q))ds$$ and the solution $U_{G}^{pq}(t)$ of
\begin{equation}\label{perturbed-difference-equation}
\begin{cases}
\frac{d}{dt}U = A_{G}^{pq}(t) U \\
U(0)=I,
\end{cases}
\end{equation}
for any $p,q\in W$.

We first prove the positiveness of $U_{G}^{pq}(t)$ for $t\in [t_*, 2t_*]$, i.e., $U_{G}^{pq}(t)[C\backslash \{0\}]\subset \text{Int }C$ for $t\in [t_*, 2t_*]$. Since system \eqref{ODEequation} is $C$-cooperative, there exists a $\delta_{1}>0$ such that $B_{\delta_{1}}(U_{F}^{pq}(t)v)\subset \text{Int }C$ for all $v\in C\backslash \{0\}$ with $|v|=1$. A positive $\delta$ can be found such that if $\parallel F(z)-G(z)\parallel +\parallel DF(z)-DG(z)\parallel<\delta$ then $\parallel U_{F}^{pq}(t)-U_{G}^{pq}(t) \parallel \leq \frac{\delta_{1}}{2}$ hold for $t\in [t_{*},2t_{*}]$.

In fact, by the definition of flow, we have
\begin{equation*}
\begin{cases}
\frac{d}{dt}\phi_{t}(z) = F(\phi_{t}(z)),\\
\frac{d}{dt}\psi_{t}(z) = G(\psi_{t}(z)).
\end{cases}
\end{equation*}
Then,
\begin{align*}
\parallel \phi_{t}(z)-\psi_{t}(z)\parallel & \leq \int^{t}_{0}\parallel F(\phi_{s}(z))-G(\psi_{s}(z))\parallel ds \\ & \leq \int^{t}_{0} \parallel F(\phi_{s}(z))-F(\psi_{s}(z))\parallel ds + \int^{t}_{0} \parallel F(\psi_{s}(z))-G(\psi_{s}(z)) \parallel ds.
\end{align*}
Since $F$ and $G$ are $C^{1}$ and $t_{*} \leq t \leq 2t_{*}$, there exists some $M > 0$ such that $\parallel F(\phi_{s}(z))-F(\psi_{s}(z))\parallel \leq M \parallel \phi_{s}(z)-\psi_{s}(z)\parallel$. Then $\parallel \phi_{t}(z)-\psi_{t}(z)\parallel \leq \int^{t}_{0}M \parallel \phi_{s}(z)-\psi_{s}(z)\parallel ds + \delta t$.

Using the Gronwall's inequality, we obtain
\begin{equation*}
\parallel \phi_{t}(z)-\psi_{t}(z)\parallel \leq \delta 2t_{*} e^{M2t_{*}}.
\end{equation*}
Hence, for $s\in [0,1]$,
\begin{align*}
&\ \ \ \ \parallel [s\phi_{t}(p)+(1-s)\phi_{t}(q)]-[s\psi_{t}(p)+(1-s)\psi_{t}(q)]\parallel
\\ & \leq s\parallel \phi_{t}(p)-\psi_{t}(p)\parallel+ (1-s)\parallel \phi_{t}(q)-\psi_{t}(q)\parallel\\ & \leq s\delta 2t_{*} e^{M2t_{*}}+ (1-s)\delta 2t_{*} e^{M2t_{*}} =  \delta 2t_{*} e^{M2t_{*}}.
\end{align*}
Note that $U_{F}^{pq}(t)$ and $U_{G}^{pq}(t)$ are the solutions of \eqref{difference-equation} and \eqref{perturbed-difference-equation}, respectively, 
then
\begin{align*}
\parallel U_{F}^{pq}(t)-U_{G}^{pq}(t) \parallel \leq & \int^{t}_{0} \parallel  A_{F}^{pq}(s)U_{F}^{pq}(s)-A_{G}^{pq}(s)U_{G}^{pq}(s) \parallel ds
\\ \leq & \int^{t}_{0} \parallel A_{F}^{pq}(s) \parallel \parallel U_{F}^{pq}(s)-U_{G}^{pq}(s)\parallel ds \\ & +\int^{t}_{0} \parallel A_{F}^{pq}(s)-A_{G}^{pq}(s) \parallel \parallel U_{G}^{pq}(s) \parallel ds.
\end{align*}
Using the Gronwall's inequality again, it follows that
\begin{equation*}
\parallel U_{F}^{pq}(t)-U_{G}^{pq}(t) \parallel \leq e^{\int^{t}_{0} \parallel A_{F}^{pq}(s) \parallel ds} \int^{t}_{0} \parallel A_{F}^{pq}(s)-A_{G}^{pq}(s) \parallel \parallel U_{G}^{pq}(s) \parallel ds.
\end{equation*}
There exist $N>0$ and $L>0$ such that $\parallel U_{G}^{pq}(s) \parallel \leq N$ and $\parallel A_{F}^{pq}(s) \parallel \leq L$ for all $t\in [t_*, 2t_*]$.
Meanwhile,
\begin{align*}
\parallel A_{F}^{pq}(t)-A_{G}^{pq}(t)\parallel \leq & \int^{1}_{0}\parallel DF(s\phi_t(p)+(1-s)\phi_t(q))-DG(s\psi_t(p)+(1-s)\psi_t(q))\parallel ds \\ \leq & \int^{1}_{0}\parallel DF(s\phi_{t}(p)+(1-s)\phi_{t}(q))-DF(s\psi_{t}(p)+(1-s)\psi_{t}(q))\parallel \\ & +\parallel DF(s\psi_{t}(p)+(1-s)\psi_{t}(q))-DG(s\psi_{t}(p)+(1-s)\psi_{t}(q))\parallel ds\\ \leq & \int^{1}_{0} \parallel DF(s\phi_{t}(p)+(1-s)\phi_{t}(q))-DF(s\psi_{t}(p)+(1-s)\psi_{t}(q))\parallel ds+ \delta.
\end{align*}

Since $F$ is $C^{1}$ and $\parallel [s\phi_{t}(p)+(1-s)\phi_{t}(q)]-[s\psi_{t}(p)+(1-s)\psi_{t}(q)]\parallel \leq \delta 2t_{*} e^{M2t_{*}}$, one can choose $\delta < \frac{1}{2}\cdot \frac{\delta_{1}}{2}\cdot \frac{1}{2Nt_{*}e^{2t_{*}L}}$ small enough such that $\parallel DF(s\phi_{t}(p)+(1-s)\phi_{t}(q))-DF(s\psi_{t}(p)+(1-s)\psi_{t}(q))\parallel < \frac{1}{2}\cdot \frac{\delta_{1}}{2}\cdot \frac{1}{2Nt_{*}e^{2t_{*}L}}$. So, $\parallel A_{F}^{pq}(t)-A_{G}^{pq}(t)\parallel \leq \frac{\delta_{1}}{2}\cdot \frac{1}{2Nt_{*}e^{2t_{*}L}}$, which implies that $\parallel U_{F}^{pq}(t)-U_{G}^{pq}(t) \parallel \leq \frac{\delta_{1}}{2}$.
Thus, we obtain that $U_{G}^{pq}(t)[C\backslash \{0\}]\subset \text{Int }C$ for $t\in [t_{*},2t_{*}]$ and $p,q\in W$.

For $t>2t_*$, let us write $t=t_0+kt_*$ with $t_0\in [t_*,2t_*)$. Define $t_j=jt_*$, $p_j=\psi_{t_j}(p)$, $q_j=\psi_{t_j}(q)$ with $j=1,2,\cdots,k$. It is clear that $p_j, q_j\in W$ if $W$ is positively invariant. Since $\psi_{t}(p)-\psi_{t}(q)=U_{G}^{pq}(t)(p-q)$, we obtain that
\begin{equation*}
U_{G}^{pq}(t)=U_{G}^{p_kq_k}(t_0)U_{G}^{p_{k-1}q_{k-1}}(t_*)\cdots U_{G}^{p_1q_1}(t_*)U_{G}^{pq}(t_*).
\end{equation*}
By the preceding proof, $U_{G}^{pq}(t)[C\backslash \{0\}]\subset \text{Int }C$ for $t>2t_*$.

Thus, we have proved that the system \eqref{perturbed-equation} is $C$-cooperative for $t\geq t_*$.
\end{proof}

\subsection{Proof of the main theorem}

Before proving Theorem \ref{thm}, we give the following lemma about the singularly perturbed systems on $\mathbb{R}^{n} \times \mathbb{R}^{m} \times [0,\epsilon_{0}]$, which is a restatement of the Theorem 2.1 and Theorem 3.1 in Sakamoto \cite{Saka90}.

\begin{lem}\label{singularlemm}
Consider the system
\begin{equation}\label{singularequ}{}
\begin{cases}
\frac{dx}{d\tau}  = \epsilon f(x,y,\epsilon),\\
\frac{dy}{d\tau}  = g(x,y,\epsilon),
\end{cases}
\end{equation}
where $f :\mathbb{R}^{n} \times \mathbb{R}^{m} \times [0,\epsilon_{0}] \rightarrow \mathbb{R}^{n}$ is $C^{r}_{b}$ and
$g :\mathbb{R}^{n} \times \mathbb{R}^{m} \times [0,\epsilon_{0}] \rightarrow \mathbb{R}^{m}$ is $C^{r}_{b}$. For $x\in \mathbb{R}^{n}$, there is a $C^{r}_{b}$ function $\overline{h} :\mathbb{R}^{n} \rightarrow \mathbb{R}^{m}$ such that $g(x,\overline{h}(x),0)=0$. And there exists a positive constant $\mu$ such that all eigenvalues of the matrix $D_{y}g(x,\overline{h}(x),0)$ have negative real parts less than $-\mu$ for every $x \in \mathbb{R}^{n}$.

Then, there exists a positive number $\epsilon_{1} < \epsilon_{0}$ such that for every $\epsilon \in (0,\epsilon_{1}]$:
\begin{enumerate}[$1)$]
\item
There exists a $C_{b}^{r-1}$ function $h : \mathbb{R}^{n} \times [0,\epsilon_{1}] \rightarrow \mathbb{R}^{m}$
such that the set $\mathbb{C}_{\epsilon}$ defined by
\begin{equation*}
\mathbb{C}_{\epsilon} = \{ (x,h(x,\epsilon)) : x \in \mathbb{R}^{n} \},\  \epsilon \in (0,\epsilon_{1}]
\end{equation*}
is invariant under the flow generated by \eqref{singularequ} and
\begin{equation*}
\underset{x \in \mathbb{R}^{n}}{\sup} \{| h(x,\epsilon) - \overline {h}(x)| : x \in \mathbb{R}^{n} \} \sim O(\epsilon), as \ \epsilon \rightarrow 0.
\end{equation*}
In particular, we have $h(x,0) = \overline {h}(x)$ for all $x \in \mathbb{R}^{n}$.

\item
There is an $(n + m)$-dimensional $C^{r-1}$ submanifold $W^{s}(\mathbb{C}_{\epsilon})$. It is characterized by
\begin{equation*}
W^{s}(\mathbb{C}_{\epsilon}) = \{ (x_{0},y_{0}) : \underset {\tau \geq 0}{\sup} | y(\tau;x_{0},y_{0})-h_{\epsilon}(x(\tau;x_{0},y_{0})) | e^{\frac{\mu \tau}{4}} < \infty \},
\end{equation*}
where $(x(\tau;x_{0},y_{0}),y(\tau;x_{0},y_{0}))$ is the solution of \eqref{singularequ} passing through $(x_{0},y_{0})$ and $h_{\epsilon}(x)=h(x,\epsilon)$ is the function defining $\mathbb{C}_{\epsilon}$.

\item
The manifold $W^{s}(\mathbb{C}_{\epsilon})$ is a disjoint union of the $m$-dimensional $C^{r-1}$ manifold $W^{s}(\xi)$:

\begin{equation*}
W^{s}(\mathbb{C}_{\epsilon}) = \underset{\xi \in \mathbb{R}^{n}}{\cup} W^{s}(\xi).
\end{equation*}

Moreover, $W^{s}(\xi)$ is characterized as
\begin{equation*}
W^{s}(\xi)= \{ (x_{0},y_{0}) : \underset{\tau \geq 0}{\sup} | \widetilde{x}(\tau) | e^{\frac{\mu\tau}{4}} < \infty,\ \underset{\tau \geq 0}{\sup} | \widetilde{y}(\tau) | e^{\frac{\mu\tau}{4}} < \infty \},
\end{equation*}
where $\widetilde{x}(\tau)=x(\tau;x_{0},y_{0})- H_{\epsilon}(\xi)(\tau)$, $\widetilde{y}(\tau)=y(\tau;x_{0},y_{0})- h_{\epsilon}(H_{\epsilon}(\xi)(\tau))$, where $H_{\epsilon}(\xi)(\tau)$ stands for a unique solution of $\frac{dx}{d\tau}=\epsilon f(x,h_{\epsilon}(x),\epsilon)$, $x(0)= \xi \in \mathbb{R}^{n}$.

\item
There is a constant $\delta_{0} > 0$ such that if a solution $(x(\tau),y(\tau))$ of \eqref{singularequ} satisfies
\begin{equation*}
\underset {\tau \geq 0}{\sup} | y(\tau) - h_{\epsilon}(x(\tau)) | < \delta_{0},
\end{equation*}
then $(x(0),y(0)) \in W^{s}(\mathbb{C}_{\epsilon})$.

\item
The fibers are positively invariant in the sense that
\begin{equation*}
W^{s}(H_{\epsilon}(\xi)(\tau)) = \{ (x(\tau;,x_{0},y_{0}),y(\tau;x_{0},y_{0})) : (x_{0},y_{0}) \in W^{s}(\xi) \},
\end{equation*}
for each $\tau \geq 0$.

\item
The fibers restricted to the $\delta_{0}$ neighborhood of $\mathbb{C}_{\epsilon}$, denoted by $W^{s}_{\epsilon, \delta_{0}}$, can be parameterized as follows. Let $L_{\delta_{0}}=\{ z\in \mathbb{R}^{m} : |z|\leq \delta_{0} \}$. There are two $C^{r-1}_{b}$ functions
\begin{equation*}
P_{\epsilon, \delta_{0}} : \mathbb{R}^{n} \times L_{\delta_{0}} \rightarrow \mathbb{R}^{n},
\end{equation*}
\begin{equation*}
Q_{\epsilon, \delta_{0}} : \mathbb{R}^{n} \times L_{\delta_{0}} \rightarrow \mathbb{R}^{m},
\end{equation*}
and a map
\begin{equation*}
T_{\epsilon, \delta_{0}} : \mathbb{R}^{n} \times L_{\delta_{0}} \rightarrow \mathbb{R}^{n} \times \mathbb{R}^{m}
\end{equation*}
mapping $(\xi, \eta)$ to $(x,y)$, where
\begin{equation*}
x = \xi + P_{\epsilon, \delta_{0}}(\xi, \eta), \ y = h(x, \epsilon) + Q_{\epsilon, \delta_{0}}(\xi, \eta),
\end{equation*}
such that
\begin{equation*}
W^{s}_{\epsilon, \delta_{0}}(\xi) = T_{\epsilon, \delta_{0}}(\xi, L_{\delta_{0}}).
\end{equation*}
\end{enumerate}
\end{lem}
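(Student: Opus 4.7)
The plan is to prove Lemma \ref{singularlemm} by the Lyapunov--Perron (variation-of-constants) method standard in geometric singular perturbation theory, constructing in turn the slow manifold $\mathbb{C}_\epsilon$, its stable foliation $\{W^s(\xi)\}$, and the normal-coordinate parameterization. First translate the fast variable via $z=y-\overline{h}(x)$ so the critical manifold becomes $\{z=0\}$ and the system reads
\begin{equation*}
\frac{dx}{d\tau}=\epsilon\,\widetilde{f}(x,z,\epsilon),\qquad \frac{dz}{d\tau}=B(x)z+R(x,z,\epsilon),
\end{equation*}
where $B(x)=D_yg(x,\overline{h}(x),0)$ is uniformly hyperbolic with spectrum in $\{\mathrm{Re}\,\lambda<-\mu\}$, $R(x,0,0)=0$, and $D_zR(x,0,0)=0$. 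Multiply the nonlinearities by a smooth cutoff supported in a small $z$-neighborhood of $0$ so that the modified right-hand sides are globally Lipschitz with arbitrarily small Lipschitz constant; every object constructed below will lie in the unmodified region, so the cutoff does not affect the conclusions.

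For item $(1)$, seek $\mathbb{C}_\epsilon$ as a graph $z=\widehat{h}(x,\epsilon)$. Invariance is equivalent to requiring that the trajectory through $(x_0,\widehat{h}(x_0,\epsilon))$ has a backward orbit bounded in $z$; writing the variation-of-constants formula on $(-\infty,0]$ and using the exponential \emph{growth} of $e^{-B(\cdot)s}$ as $s\to-\infty$ to enforce boundedness converts this into a fixed-point equation
\begin{equation*}
\Gamma[\widehat{h}](x_0)=\int_{-\infty}^{0}e^{-B(x(s))s}\,R\bigl(x(s),\widehat{h}(x(s),\epsilon),\epsilon\bigr)\,ds,
\end{equation*}
posed on the Banach space of small $C^{r-1}_b$ maps $\widehat{h}:\mathbb{R}^n\to\mathbb{R}^m$, where $x(s)$ denotes the slow orbit along the candidate graph. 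The smallness of $R$ (from the cutoff and the vanishing of $R$ and $D_zR$ at $(x,0,0)$) together with the exponential decay factor makes $\Gamma$ a uniform contraction for $\epsilon\in(0,\epsilon_1]$. The $O(\epsilon)$ estimate is then read off from the operator bound, since the only forcing at $z=0$ is $R(x,0,\epsilon)=g(x,\overline{h}(x),\epsilon)-\epsilon D\overline{h}(x)\,\widetilde{f}(x,0,\epsilon)$, which is $O(\epsilon)$ uniformly in $x$.

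For items $(2)$--$(3)$ and $(5)$, linearize the flow along a slow orbit $H_\epsilon(\xi)(\tau)$ on $\mathbb{C}_\epsilon$; the resulting time-dependent linear system has an exponential dichotomy with $m$-dimensional stable subspace inherited from $B(\cdot)$. Parameterize candidate points of $W^s(\xi)$ by the initial stable coordinate $\eta$ and solve a second Lyapunov--Perron problem on $[0,\infty)$ requiring $|\widetilde{x}(\tau)|e^{\mu\tau/4}$ and $|\widetilde{y}(\tau)|e^{\mu\tau/4}$ to be bounded; the unique solution gives $W^s(\xi)$ as a $C^{r-1}$ graph in $\eta$ and yields the decay characterization of $(2)$. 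Uniqueness of forward trajectories produces the disjoint union of $(3)$, and positive invariance $(5)$ is automatic since time-translation preserves exponential decay. For $(4)$, any solution trapped within $\delta_0$ of $\mathbb{C}_\epsilon$ has its $z$-component forced to decay exponentially, again by the dichotomy, and so lies in $W^s(\mathbb{C}_\epsilon)$. For $(6)$, write the fiber in the local chart $(\xi,\eta)$ and apply the implicit function theorem at $\eta=0$, where the Jacobian equals the identity; this produces the $C^{r-1}_b$ maps $P_{\epsilon,\delta_0}$ and $Q_{\epsilon,\delta_0}$.

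The main obstacle is carrying out the three quantitative estimates uniformly in $\epsilon\in(0,\epsilon_1]$: existence via contraction on the space $C^{r-1}_b$, the sharp $O(\epsilon)$ bound in $(1)$, and the $C^{r-1}$ regularity of the fibers. The drop from $C^r$ to $C^{r-1}$ is the standard cost of differentiating through a fiber contraction whose base point varies slowly, and controlling derivatives of the flow along slowly drifting base orbits is the technically most delicate step --- it is precisely the point at which Sakamoto's treatment refines the classical Fenichel argument and it is what forces the choice of weighted sup-norm spaces used above.
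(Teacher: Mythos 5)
The paper does not prove this lemma at all: it is stated explicitly as ``a restatement of the Theorem 2.1 and Theorem 3.1 in Sakamoto \cite{Saka90}'' and is used as a black box, so there is no in-paper argument to compare against. Your outline follows essentially the same route as the cited source --- preliminary translation $z=y-\overline{h}(x)$, a cutoff to make the nonlinearity globally small-Lipschitz, a Lyapunov--Perron fixed point on a weighted space for the slow manifold, a second Lyapunov--Perron problem along base orbits $H_{\epsilon}(\xi)(\tau)$ for the stable fibers, and a fiber-contraction/implicit-function step for the $C^{r-1}$ regularity and the parameterization in item $(6)$ --- so as a plan it is the right one and consistent with how the result is actually established in \cite{Saka90}.

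That said, what you have written is a strategy, not a proof: the contraction constants, the uniformity in $\epsilon\in(0,\epsilon_{1}]$, and the smoothness bookkeeping (the $C^{r}\to C^{r-1}$ loss) are precisely the content of Sakamoto's Sections 2--3, and you defer all of them. One concrete technical slip: in your fixed-point equation for $\widehat{h}$ you use $e^{-B(x(s))s}$ as the solution operator of the linearization $\dot z=B(x(\tau))z$ along a slow orbit. Since $B(x(\tau))$ at different times need not commute, the frozen-coefficient exponential is not the evolution operator; you must work with the transition matrix $\Phi(\tau,s)$ of the nonautonomous system and justify its exponential decay estimate $\lVert\Phi(\tau,s)\rVert\le Ke^{-\mu(\tau-s)/2}$ (which holds here because $x(\tau)$ drifts at speed $O(\epsilon)$, a roughness-of-dichotomy argument that itself needs $\epsilon$ small). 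This is fixable and standard, but as written the displayed fixed-point equation is not correct.
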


\begin{rmk}\label{singularrmk}

\begin{enumerate}[(1)]
\item
The $\delta_{0}$ in property 4) of Lemma \ref{singularlemm} can be chosen uniformly for $\epsilon \in (0,\epsilon_{0}]$.

\item
The property 3) of Lemma \ref{singularlemm} is often referred to as the asymptotic phase property in the way
\begin{equation*}
| x(\tau;x_{0},y_{0}) - H_{\epsilon}(\xi)(\tau) | \rightarrow 0,
\end{equation*}
\begin{equation*}
| y(\tau;x_{0},y_{0}) - h_{\epsilon}(H_{\epsilon}(\xi)(\tau) | \rightarrow 0,
\end{equation*}
as $\tau \rightarrow \infty$.
\end{enumerate}
\end{rmk}

\vskip 4mm

In order to use Sakamoto's results in Lemma \ref{singularlemm}, we firstly extend the vector fields from $U \times V $ to $\mathbb{R}^{n} \times \mathbb{R}^{m}$ for $\epsilon \in [0,\epsilon_{0}]$. The technique is standard by \cite{Nipp92}, which can also be found in \cite{WS08}, such that the extended system:
\begin{equation*}\label{fastequation}
\begin{cases}
\frac{dx}{d\tau}  = \epsilon f(x,y,\epsilon),\\
\frac{dy}{d\tau}  = g(x,y,\epsilon),
\end{cases}
\end{equation*}
satisfies the assumptions $(A1)$-$(A6)$ and the assumptions for the geometric singular perturbation in Lemma \ref{singularlemm}. Moreover, $\overline{h}(x)$ coincides with $h_{0}(x)$ on $K$, $f$ and $g$ coincide with $f_{0}$, $g_{0}$ on $\Omega_{d_{1}}$, respectively. Where $K$ is a compact set with $K_0\subset K\subset U$, $\Omega_{d_{1}} \triangleq \{ (x,y) : x \in K, y\in V, |y-h_{0}(x)| \leq d_{1} \}$ and $d_{1}>0$ is fixed such that $\delta_{0}$ in Lemma \ref{singularlemm} is less than $d_{1}$.

\vskip 4mm

\begin{proof}[Proof of Theorem \ref{thm}]
First, we consider the solutions on the invariant manifold $\mathbb{C}_{\epsilon}$ satisfying
\begin{equation}\label{perturbedequation}
\begin{cases}
\frac{dx}{dt}  =f(x,h_{\epsilon}(x),\epsilon),\\
y(t) =h_{\epsilon}(x(t)).
\end{cases}
\end{equation}
For brevity, we only focus on the $x$-direction on the invariant manifold $\mathbb{C}_{\epsilon}$, since $y=h_{\epsilon}(x)$.
Clearly, the limiting equation of \eqref{perturbedequation} is \eqref{limitingequ} as $\epsilon$ approaches zero.
For system \eqref{limitingequ}, the matrix $U_{f_{0}}^{pq}(t)$ satisfies $U_{f_{0}}^{pq}(t)[C\backslash \{0\}]\subset \text{Int }C$ for all $p,q\in K_{0}$ and $t>0$. By the continuity of $D_{\epsilon}$ and $h_{\epsilon}(x)$ at $\epsilon=0$, we can pick an $\epsilon_{2}<\epsilon_{1}$ small enough such that $U_{f_{0}}^{pq}(t)$ satisfies $U_{f_{0}}^{pq}(t)[C\backslash \{0\}]\subset \text{Int }C$ for $p,q\in K_{\epsilon}$ and $\epsilon \in (0,\epsilon_{2})$, where $K_{\epsilon}$ is the projection of $\mathbb{C}_{\epsilon} \cap D_{\epsilon}$ to the $x$-axis. Note also that $D_{\epsilon}$ is positively invariant under \eqref{perturbedequation} and $\mathbb{C}_{\epsilon}$ is an invariant manifold. Then, $K_{\epsilon}$ is positively invariant under the flow $\psi^{\epsilon}_{t}$ of \eqref{perturbedequation}. Applying Lemma \ref{regularlem}, we obtain that there exist an $\epsilon_3 \leq \epsilon_2$ and some $t_{*}>0$ such that for each $\epsilon \in (0,\epsilon_3)$, system \eqref{perturbedequation} is $C$-cooperative for $t\geq t_*$.

By \cite[Proposition 1]{Sanchez09}, it is clear that the flow of system \eqref{perturbedequation} satisfies the assumption (FWW) (see \cite[p.4]{FWW19}) for $t\geq t_*$. Then \cite[Theorem 5.3]{FWW19} implies that there exists an open and dense subset $M_{\epsilon}\subset K_{\epsilon}$ such that for any $x\in M_{\epsilon}$, the $\omega$-limit set $\omega(x)$ containing no equilibrium is a single closed orbit.

It follows from the Lemma $7$ in \cite{WS08} that there exist $\epsilon_4>0$ and $d\in (0,\delta_{0})$ such that, for each $\epsilon \in (0,\epsilon_4)$, $(x_0,y_0) \in W^{s}(\mathbb{C}_{\epsilon})$ whenever $(x_0,y_0)\in D_{\epsilon}$ satisfies $|y_{0} - h_{\epsilon}(x_{0})| < d$. Moreover, Lemma \ref{singularlemm} $3)$ guarantees that $(x_0,y_0)\in W^{s}_{\epsilon, d}(\xi)$, where $\xi \in K_{\epsilon}$ and $W^{s}_{\epsilon, d}(\xi)$ is the stable fiber restricted to the $d$ neighborhood of $\mathbb{C}_{\epsilon}$.

Let $N_{\epsilon}=\{(x,y)\in D_{\epsilon} : |y - h_{\epsilon}(x)| < d \}$.
In the following, we will show that the set $\underset{\xi \in M_{\epsilon}}{\cup} W^{s}_{\epsilon, d}(\xi)$ is open and dense in $N_{\epsilon}$.
By Lemma \ref{singularlemm} $6)$, the stable fiber can be characterized as $W^{s}_{\epsilon, d}(\xi) = T_{\epsilon, d}(\xi, L_d)$, where $L_d=\{ z\in \mathbb{R}^{m} : |z|<d \}$. In other words, $\underset{\xi \in M_{\epsilon}}{\cup} W^{s}_{\epsilon, d}(\xi)=T_{\epsilon, d}(M_{\epsilon},L_d)$. For any $(x_0,y_0)\in N_{\epsilon}$, there exists $(\xi_0,\eta_0)\in K_{\epsilon} \times L_d$ such that $(x_0,y_0)=T_{\epsilon, d}(\xi_0, \eta_{0})$. Since $M_{\epsilon}$ is dense in $K_{\epsilon}$, there exists a sequence $\{ \xi_n\}\subset M_{\epsilon}$ such that $\xi_n \to \xi_0$. Let $\eta_n= \eta_0$ and $(x_n,y_n)=T_{\epsilon, d}(\xi_n,\eta_n)$ for $n=1,2,\cdots$, then $(\xi_n,\eta_n)\to (\xi_0,\eta_0)$ and $(x_n,y_n)\in T_{\epsilon, d}(M_{\epsilon},L_d)$. Since the mapping $T_{\epsilon, d}$ is continuous, one has $(x_n,y_n)\to (x_0,y_0)$ as $n\to \infty$. Thus, the set $T_{\epsilon, d}(M_{\epsilon},L_d)$ is dense in $N_{\epsilon}$. In order to prove the openess of $T_{\epsilon, d}(M_{\epsilon},L_d)$, we only need to show the inverse of the mapping $T_{\epsilon, d}$ is continuous, because  both $M_{\epsilon}$ and $L_d$ are open sets. Clearly, the mapping $T_{\epsilon, d}$ is both injective and surjective. And by the proof of the Claim $3.7$ in \cite{Saka90}, the inverse of $T_{\epsilon, d}$ is continuous (In fact, there exists a mapping $\phi^*$, see \cite[Lemma 3.9]{Saka90}, such that $\xi_0=x_0-\phi^*(0)$ and $\eta_0=y_0-h_{\epsilon}(x_{0})$ hold for $(x_0,y_0)\in N_{\epsilon}$). Thus, the set $T_{\epsilon, d}(M_{\epsilon},L_d)$ is open and dense in $N_{\epsilon}$.

Moreover, there exist some positive $\tau_{0}$ and an $\epsilon_5 < \epsilon_4$ such that $|y(\tau_{0})-h_{\epsilon}(x(\tau_{0}))| < d$ for all $\epsilon \in (0,\epsilon_5)$, whenever $(x(\tau),y(\tau))$ is the solution to \eqref{Fastequ} with $(x(0),y(0))=(x_{0},y_{0})\in D_{\epsilon}$. This implies that $(x(\tau_{0}),y(\tau_{0})) \in N_{\epsilon}$. The corresponding proof is similar as Lemma 8 in \cite{WS08} with the positive invariance of $D_{\epsilon}$
(In fact, $|y(\tau)-h_{\epsilon}(x(\tau))| \leq |y(\tau)-h_{0}(x(\tau))| + |h_{0}(x(\tau)) - h_{\epsilon}(x(\tau))|$,
the assumption $A5$ and Lemma \ref{singularlemm} $1)$ imply that there exist a $\tau_{0}>0$ and  $\epsilon_5 < \epsilon_4$ such that $|y(\tau_{0})-h_{0}(x(\tau_{0}))|<\dfrac{d}{2}$ and $|h_{0}(x(\tau_{0})) - h_{\epsilon}(x(\tau_{0}))|<\dfrac{d}{2}$).
Let $\phi^{\epsilon}_{\tau}$ be the flow of \eqref{Fastequ},
and $F(D_{\epsilon}) = \{ \phi^{\epsilon}_{\tau_0}((x,y)):(x,y)\in D_{\epsilon} \} \subset N_{\epsilon}$. Since $\phi^{\epsilon}_{\tau_0}$ is diffeomorphism, the set $G_{\epsilon}=F(\text{Int } D_{\epsilon})\cap T_{\epsilon, d}(M_{\epsilon},L_d)$ is open and dense in $F(D_{\epsilon})$. So, $F^{-1}(G_{\epsilon})$ is open and dense in $D_{\epsilon}$. Let $\mathcal{P}_{\epsilon}=F^{-1}(G_{\epsilon})$. For $(x,y)\in \mathcal{P}_{\epsilon}$, the fact that $\phi^{\epsilon}_{\tau_0}((x,y))\in T_{\epsilon, d}(M_{\epsilon},L_d)$ implies the $\omega$-limit set $\omega((x,y))$ containing no equilibrium is a single closed orbit by the asymptotic phase property.

We have completed the proof of Theorem \ref{thm} by taking $\epsilon^{*} = \min \{\epsilon_3,\epsilon_5 \}$.
\end{proof}

\section{An Example}

In this section, we consider the following singular perturbed ODE's system:
\begin{equation}\label{singular-high-cone}
\begin{cases}
\frac{dx}{dt}  = x-y-\frac{3}{2}xz^2-\frac{x(x^2+y^2)}{2}+\epsilon xw,\\
\frac{dy}{dt}  = x+y-\frac{3}{2}yz^2-\frac{y(x^2+y^2)}{2}+\epsilon yw,\\
\frac{dz}{dt}  = -z-\frac{z^3}{2}-\frac{3}{2}z(x^2+y^2)+\epsilon zw,\\
\epsilon \frac{dw}{dt}  = -w+x+y+z,
\end{cases}
\end{equation}
where the parameter $\epsilon< \epsilon_{0}=0.1$. Let
\begin{equation*}
\begin{split}
D_{\epsilon} = D = \{ & (x,y,z,w) : |x| \leq 4, |y| \leq 4, |z| \leq 4,|w| \leq 16 \}.
\end{split}
\end{equation*}
Assume that $U\subset \mathbb{R}^{3}$, $V\subset \mathbb{R}^{1}$ are open bounded sets such that $D_{\epsilon}\subset U\times V$.

We consider the limiting system:
\begin{equation}\label{limit-singular-high-cone}
\begin{cases}
\frac{dx}{dt} = x-y-\frac{3}{2}xz^2-\frac{x(x^2+y^2)}{2},\\
\frac{dy}{dt} = x+y-\frac{3}{2}yz^2-\frac{y(x^2+y^2)}{2},\\
\frac{dz}{dt} = -z-\frac{z^3}{2}-\frac{3}{2}z(x^2+y^2).
\end{cases}
\end{equation}
This system is inspired by Ortega and S\'{a}nchez \cite{OS00}, where they found a stable closed orbit of so called $P$-competitive systems (see \cite[P2914]{OS00}) such that every orbit tends to the closed orbit or the origin.
It is easy to see that for the limiting system \eqref{limit-singular-high-cone}, the origin is the unique equilibrium point and the linearization in the origin has eigenvalues $-1$ and $1\pm i$.

Let $P$ be a diagonal matrix
\begin{equation*}
P=\begin{pmatrix} -1 & 0 & 0  \\ 0 & -1 & 0 \\ 0 & 0 & 1  \end{pmatrix}.
\end{equation*}
Let $\lambda:\mathbb{R}^{3} \to \mathbb{R}$ be a continuous function (not necessarily positive) such that the matrices
\begin{equation}\label{negative-matrix}
PDF(\xi)+DF(\xi)^{*}P+\lambda (\xi)P
\end{equation}
are negatively definite, for each $\xi\in U$, where $F$ is the vector field of \eqref{limit-singular-high-cone} and $DF(\xi)^{*}$ stands for the transpose of $DF(\xi)$.
In fact, a straightforward calculation yeilds that for any function $\lambda$ with $3x^2+3y^2+3z^2-2 < \lambda (x,y,z) < 3x^2+3y^2+3z^2+2$, the matrices \eqref{negative-matrix} are negatively definite.

Define
\begin{equation*}\label{convex-cone}
C=\{\xi \in \mathbb{R}^{3} : \langle P\xi,\xi \rangle \leq 0 \},
\end{equation*}
where $\langle \cdot ,\cdot \rangle$ is the inner product in $\mathbb{R}^{3}$.
It is clear that $C$ is a $2$-solid cone and system \eqref{limit-singular-high-cone} is $C$-cooperative (see \cite{Sanchez09} or \cite{FWW17,FWW19}).

One can check that the assumptions (A1)-(A6) hold for system \eqref{singular-high-cone}, 
since the fast variable $w$ has the simple case that the fourth equation of \eqref{singular-high-cone} is linear for $w$.
Thus, we are able to obtain the following generic Poincar\'{e}-Bendixson Theorem:
\begin{thm}\label{application}
There exists a positive constant $\epsilon^{*} < \epsilon_{0}$ such that for each $\epsilon \in (0,\epsilon^{*})$, system \eqref{singular-high-cone} has the following property: there exists an open and dense subset $\mathcal{P}_{\epsilon} \subset  D_{\epsilon}$ such that, for each $z\in \mathcal{P}_{\epsilon}$, $\omega$-limit set $\omega(z)$ that contains no equilibrium is a single closed orbit.
\end{thm}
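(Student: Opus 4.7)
The plan is to obtain Theorem \ref{application} as a direct application of Theorem \ref{thm} by verifying that the hypotheses (A1)--(A6) hold for the system \eqref{singular-high-cone} in the fast/slow decomposition with slow variable $X=(x,y,z)\in\mathbb{R}^{3}$ and fast variable $w\in\mathbb{R}$. Setting $f_{0}$ to be the first three right-hand sides and $g_{0}(X,w,\epsilon)=-w+x+y+z$, both are polynomials in $(X,w,\epsilon)$, so (A1) is immediate on any bounded open $U\times V$ for every $r>1$.

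Next I would extract the quasi-steady state explicitly. Solving $g_{0}(X,h_{0}(X),0)=0$ gives the smooth function $h_{0}(X)=x+y+z$, verifying (A2). Since $\partial_{w}g_{0}\equiv -1$, the single fast eigenvalue equals $-1$, giving (A3). In the shifted variable $Z=w-h_{0}(X)$ the layer equation reads $dZ/d\tau=-Z$, whose trivial equilibrium is globally asymptotically stable on $\mathbb{R}$, giving (A5). The reduced vector field at $\epsilon=0$ is $f_{0}(X,h_{0}(X),0)$; because every $\epsilon$-dependent term in the first three equations carries the factor $\epsilon w$, this reduced field coincides identically with the right-hand side of \eqref{limit-singular-high-cone}. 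The excerpt has already shown that this field is $C$-cooperative for the rank-$2$ cone $C=\{\xi:\langle P\xi,\xi\rangle\leq 0\}$ via the Lyapunov-type matrix inequality on $PDF(\xi)+DF(\xi)^{*}P+\lambda(\xi)P$, so (A6) follows.

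The only computation of substance is (A4), the positive invariance of the constant family $D_{\epsilon}\equiv D$ under the flow of \eqref{singular-high-cone}. I would check the sign of the outward normal derivative on each of the eight faces using the fact that $|w|\leq 16$ together with $\epsilon<0.1$ bounds the perturbation term $\epsilon xw$ (and likewise $\epsilon yw$, $\epsilon zw$) by $4\cdot 0.1\cdot 16=6.4$ on $D$. On $\{x=4\}$ the dominant cubic term $-x(x^{2}+y^{2})/2\leq -32$ easily absorbs the remaining linear and perturbative contributions, so $\dot x<0$; the estimates on $\{x=-4\},\{y=\pm 4\},\{z=\pm 4\}$ are symmetric. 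On $\{w=\pm 16\}$ the equation $\epsilon\dot w=-w+(x+y+z)$ has the correct inward sign because $|x+y+z|\leq 12<16$. Since $D_{\epsilon}$ is $\epsilon$-independent, continuity of the family in $\epsilon$ is automatic.

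With (A1)--(A6) verified, Theorem \ref{thm} applies verbatim and produces the claimed $\epsilon^{*}<\epsilon_{0}$ and open-dense set $\mathcal{P}_{\epsilon}\subset D_{\epsilon}$ on which $\omega$-limit sets containing no equilibrium are single closed orbits. I do not anticipate any essential obstacle: the verification is mechanical and the invariance check is comfortable because the box $D$ is chosen with substantial slack in the fast direction relative to the slow one.
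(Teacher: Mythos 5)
Your proposal is correct and follows exactly the route the paper intends: the paper's own ``proof'' of Theorem \ref{application} is the single remark that (A1)--(A6) can be checked because the fourth equation is linear in $w$, and you have simply carried out that verification (with $h_{0}(x,y,z)=x+y+z$, the layer equation $\dot Z=-Z$, and the face-by-face invariance estimates on $D$, all of which check out numerically). No discrepancy with the paper's approach, only more detail.
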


By a numerical simulation with $\epsilon=0.05$, we illustrate that the flow $\phi$ of system \eqref{singular-high-cone} with initial value $(x_0,y_0,z_0,w_0)=(2,2,3,12)\in D_{\epsilon}$ tends to a periodic orbit in Figure 1. It is worth pointing out that this initial value is not specially selected, actually, the orbits initiated from generic points in $D_{\epsilon}$ will be attracted to closed orbits.
\vspace{\bigskipamount}
\par \hfill
\begin{minipage}{.99\linewidth}
  \centerline{\includegraphics[width=9cm]{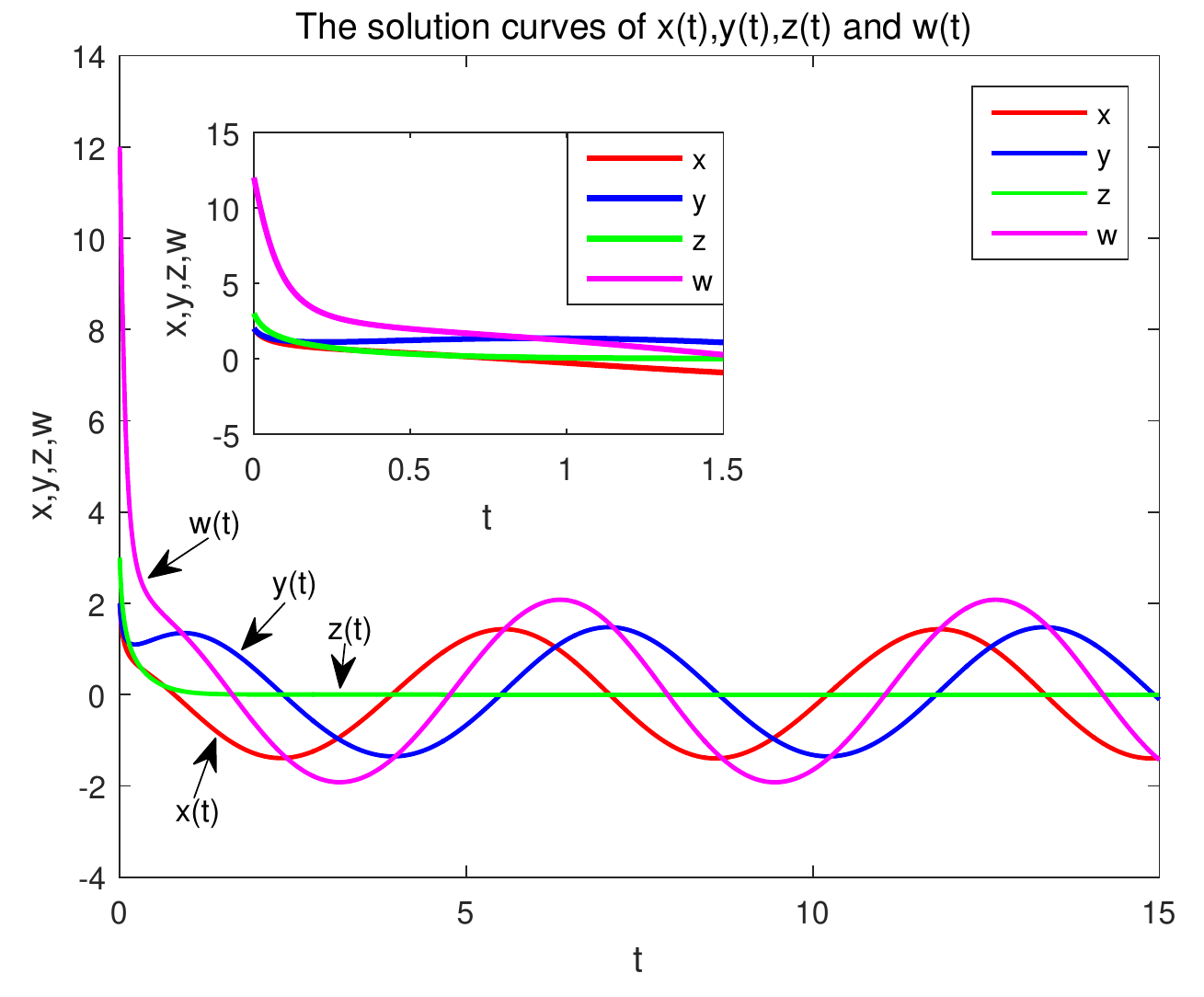}}
  \center{{\bf Fig 1}:\normalsize { The solution curves of system \eqref{singular-high-cone} with initial value $(x_0,y_0,z_0,w_0)$
  \par $=(2,2,3,12)$. The small graph is a detail view when t $\in [0,1.5]$.
  }}

\end{minipage}
\\[5pt]
\par The corresponding phase portraits are shown in Figure 2. They illustrate that the projections of the flow $\phi_t(x_0,y_0,z_0,w_0)$ onto $3$-dimensional spaces tend to periodic orbits, respectively. This implies that the flow $\phi_t(x_0,y_0,z_0,w_0)$ converges to a closed orbit in four dimensional space.
\par
\begin{figure}[!h]
	\centering
	\subfigure[\scriptsize The projection of the flow $\phi_t(x_0,y_0,z_0,w_0)$ in space O-xyz and the flow $\psi_t(x_0,y_0,z_0)$ of limiting system
 \eqref{limit-singular-high-cone}.]{
		\includegraphics[width=7.3cm]{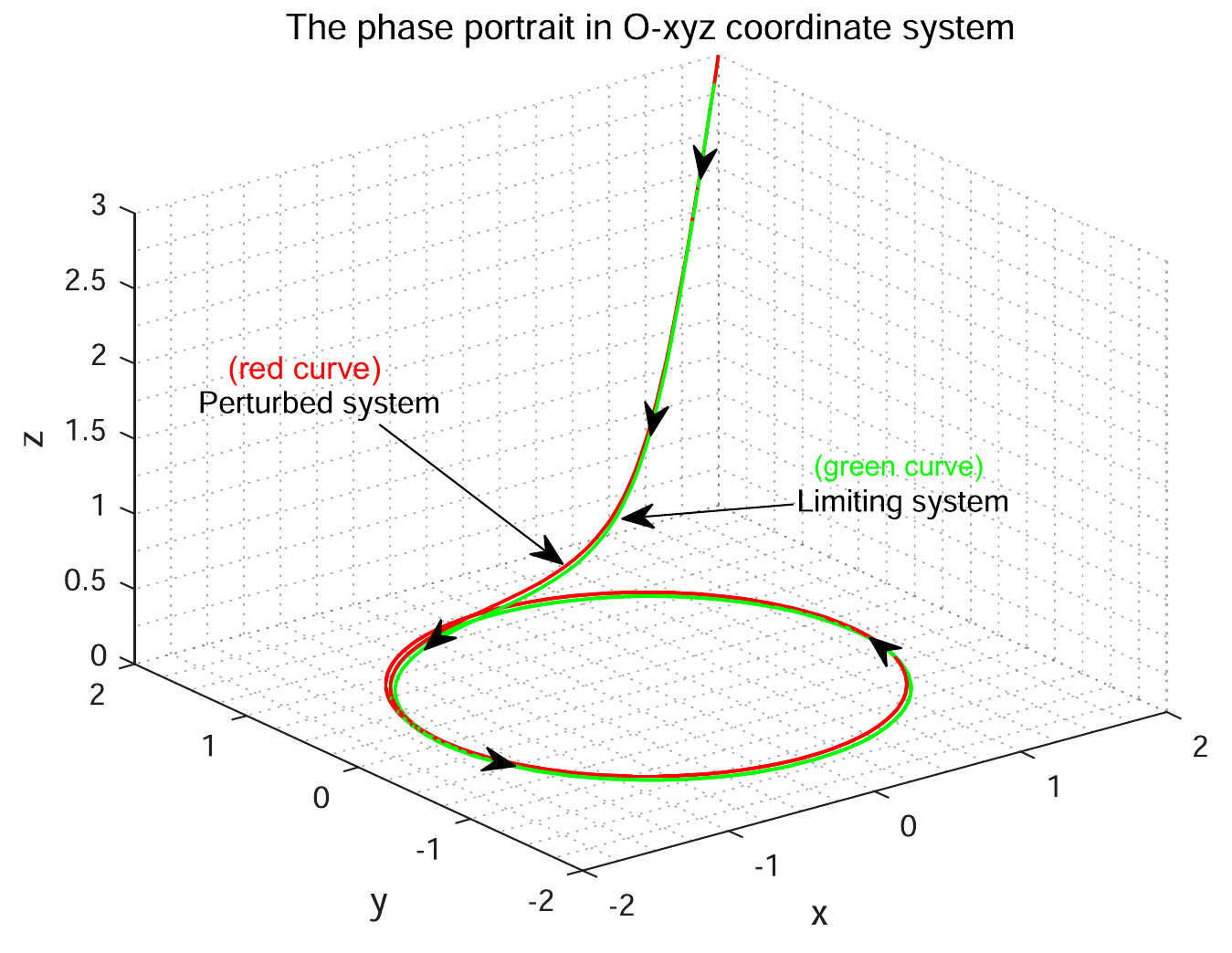}
	}
	\quad
	\subfigure[\scriptsize The projection of the flow $\phi_t(x_0,y_0,z_0,w_0)$ in space O-xyw.]{
		\includegraphics[width=7.3cm]{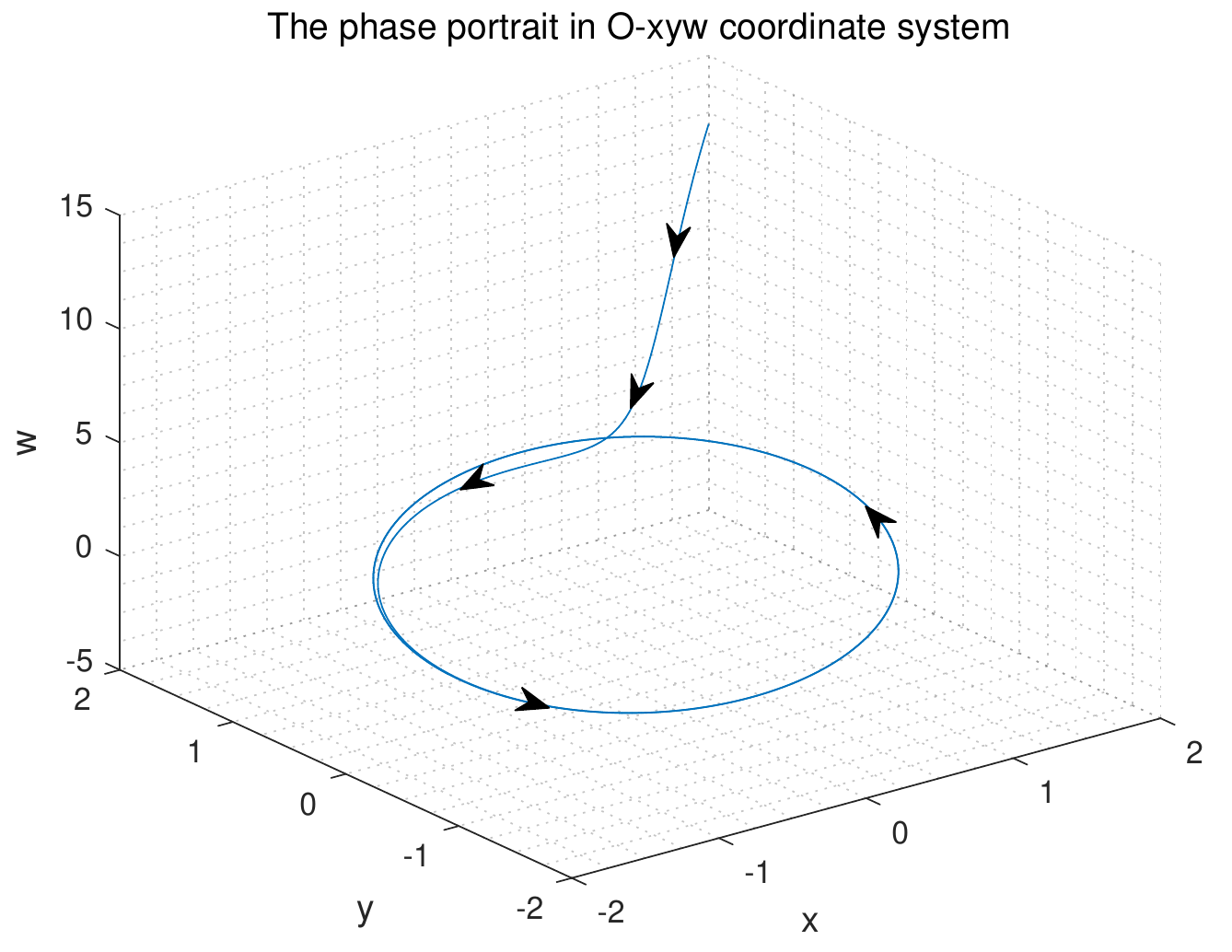}
	}
	\quad
	\subfigure[\scriptsize The projection of the flow $\phi_t(x_0,y_0,z_0,w_0)$ in space O-yzw.]{
		\includegraphics[width=7.3cm]{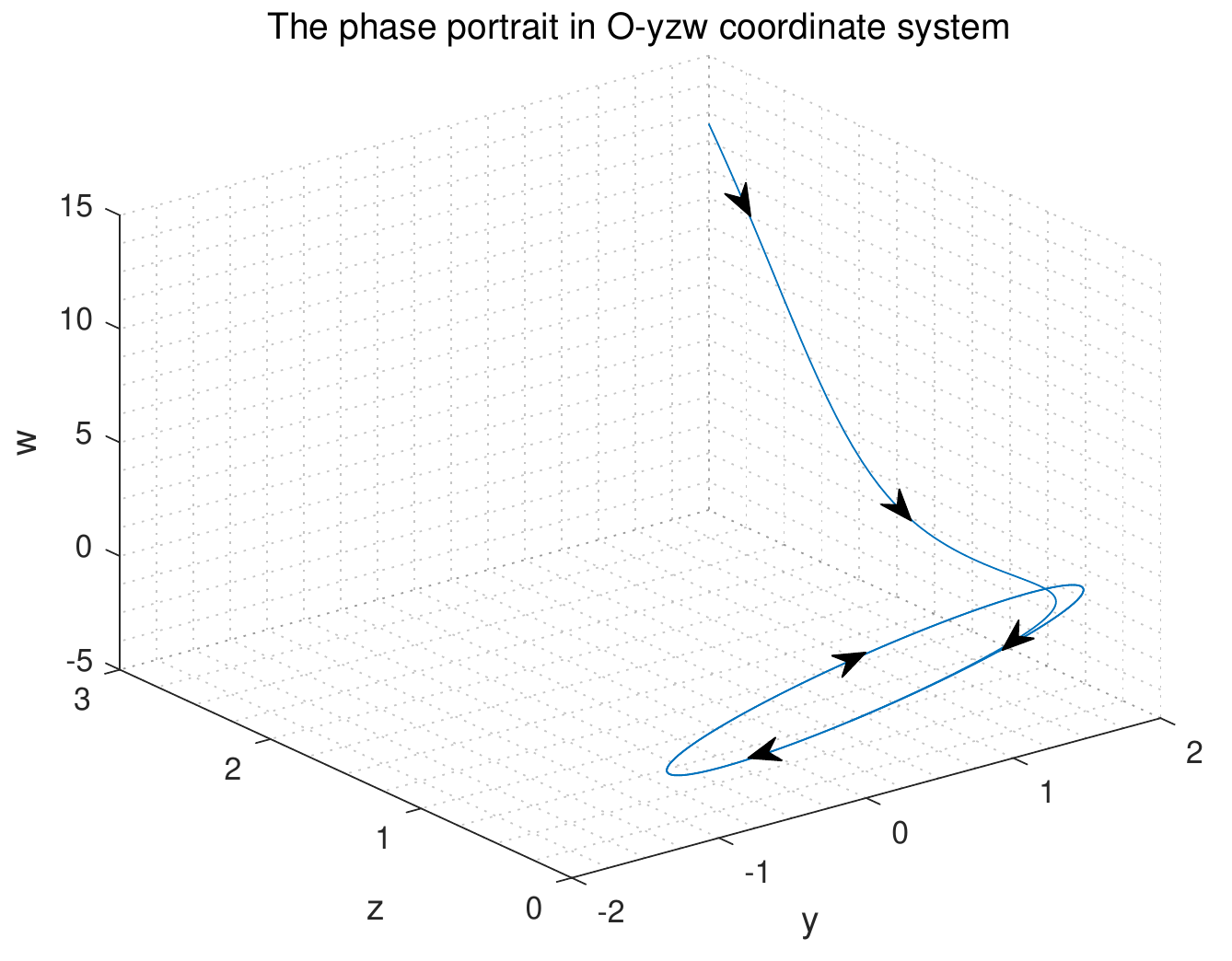}
	}
	\quad
	\subfigure[\scriptsize The projection of the flow $\phi_t(x_0,y_0,z_0,w_0)$ in space O-xzw.]{
		\includegraphics[width=7.3cm]{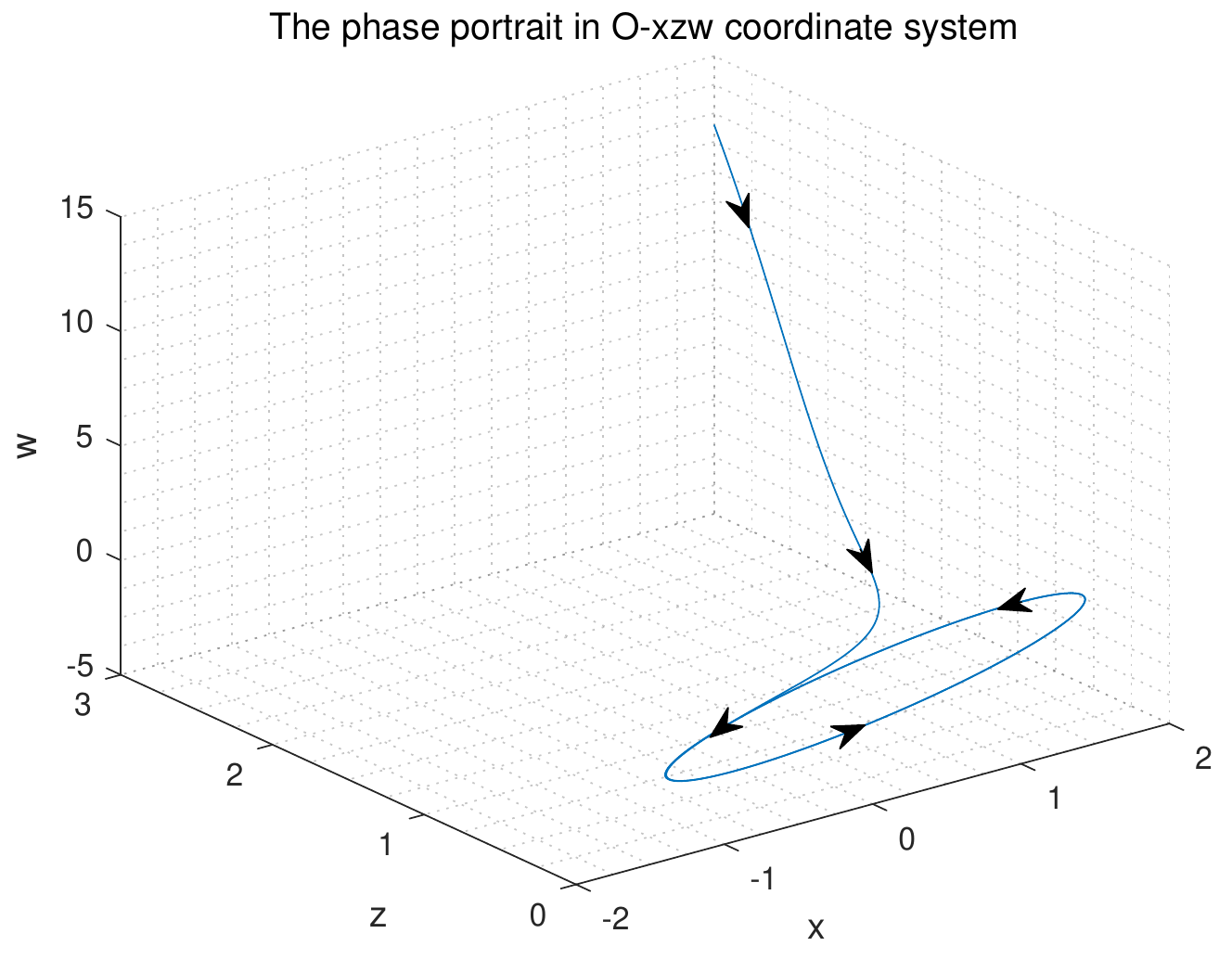}
	}
\center{{\bf Fig 2}: The corresponding phase portraits of {Fig 1}.}
\end{figure}

\begin{rmk}
Define the set $K=\{\xi \in \mathbb{R}^{3} : \langle P\xi,\xi\rangle \geq 0, \langle \xi,v_+\rangle \geq 0 \}$, where $v_+$ is an eigenvector of $P$ with respect to the positive eigenvalue $1$. It is clear that $K$ is a usual convex cone with nonempty interior. Moreover, the set $C=\{\xi \in \mathbb{R}^{3} : \langle P\xi,\xi\rangle \leq 0 \}$ can be written as $C = \overline{\mathbb{R}^{3} \backslash(K \cup (-K))}$. We say a flow $\psi_t$ is {\it strongly competitive} with respect to the partial ordering induced by $K$ if $y-x\in \text{Int }K$ whenever $\psi_{t}(y)-\psi_{t}(x)\in K\backslash\{0\}$ and $t>0$.

We emphasize that {\it a flow $\psi_t$ is strongly monotone with respect to $C$ if and only if $\psi_t$ is strongly competitive; and moreover, $D\psi_{t}(x)[C\backslash \{0 \}]\subset \text{Int }C$ for $x\in U$ and $t>0$ if and only if $D\psi_{-t}(\psi_{s}(x))[K\backslash \{0 \}]\subset \text{Int }K$ for $x\in U$, $0< t\leq s$}.

As a consequence, the fact that system \eqref{limit-singular-high-cone} is $C$-cooperative implies that its flow $\psi_{t}^{0}$ satisfies that $D\psi_{-t}^{0}(\xi)[K\backslash \{0 \}]\subset \text{Int }K$ for $t>0$ and $-t\in I(\xi)$, where $I(\xi)\subset \mathbb{R}$ denotes the maximal interval of existence of the solution passing though $\xi$. By our previous work in \cite{NW18,Niu19}, there exists a positive constant $\epsilon_{*} < \epsilon_{0}$ such that for each $\epsilon \in (0,\epsilon_{*})$, the $\omega$-limit set of \eqref{singular-high-cone} is equivalent to an $\omega$-limit set of an eventually competitive system.
Moreover, if the orbit passing though any point $\zeta=(x,y,z,w)\in D_{\epsilon}$ is tracked by a full orbit in $K_{\epsilon}$, then the $\omega$-limit set $\omega(\zeta)$ containing no equilibrium is a single closed orbit. The main result in the present paper provides a different view that a flow which is competitive can be viewed as a monotone system with respect to a high-rank cone, by which we can ignore the priori drawback restriction of the so called Non-oscillation Principle for the full orbits in competitive systems.

\end{rmk}

\section*{Acknowledgments}
The authors are greatly indebted to Professor Yi Wang for valuable suggestions which led to much improvement of this paper.

\end{document}